\newtheorem{theorem}{Theorem}
\newtheorem {corollary}{Corollary}
\newtheorem {lemma}{Lemma}
\theoremstyle{definition}
\newtheorem{example}{Example}
\DeclareMathOperator\Sym{Sym}
\DeclareMathOperator\ctop{top}
\DeclareMathOperator\rank{rank}
\begin{document}

\title[]{Numerical invariants of Fano schemes of linear subspaces on complete intersections}

\author[]{Dang Tuan Hiep}

\address{National Center for Theoretical Sciences\\ No. 1 Sec. 4 Roosevelt Rd., National Taiwan University\\ Taipei, 106, Taiwan}
\email{hdang@ncts.ntu.edu.tw}

\begin{abstract}
The goal of this paper is to explore the genus and degree of the Fano scheme of linear subspaces on a complete intersection in a complex projective space. Firstly, suppose that the expected dimension of the Fano scheme is one, we prove a genus-degree formula. Secondly, we give a degree formula for the Fano scheme.
\end{abstract}

\subjclass[2010]{14C05, 14C17}

\keywords{Fano scheme, genus-degree formula, degree formula}

\date{\today}

\maketitle

\section{Introduction}

Let $X$ be a general complete intersection of type $\underline{d} = (d_1,\ldots,d_r)$ in the projective space $\mathbb P^n$ over the complex field $\mathbb C$, provided that $n,d_1,\ldots,d_r$ are natural numbers with $n\geq 4,d_i\geq 2$ for all $i$. Recall that the Fano scheme $F_k(X)$ parametrizing linear subspaces of dimension $k$ contained in $X$ is a smooth subscheme of the Grassmannian $G(k+1,n+1)$ of linear subspaces of dimension $k$ in $\mathbb P^{n}$, provided that 
$$(k+1)(n-k) \geq \sum_{i=1}^r\binom{d_i+k}{k}$$ 
and $X$ is not a quadric, in which last case $n \geq 2k+ r$ is required (see \cite[Corollary 2.2]{B} or \cite[Theorem 2.1]{DM}). For the basic properties of $F_k(X)$, we refer to \cite{AK, BV, B, DM, L}. For the important applications of $F_k(X)$ to the geometry of $X$, we refer to \cite{BM, CG, ELV}. Some numerical invariants of $F_k(X)$ have been studied by many authors. For instance, the Picard number of $F_k(X)$ was shown in \cite[Proposition 3.1]{DM} and \cite[Theorem 0.3]{J}. The degree of $F_k(X)$ under the Pl\"ucker embedding were formulated in \cite[Theorem 4.3]{DM} and \cite[Theorem 1.1]{H}. In this paper, we explore the numerical invariants of $F_k(X)$. For convenience, we set 
$$\delta(n,\underline{d},k) = (k+1)(n-k) - \sum_{i=1}^r\binom{d_i+k}{k},$$
which is the expected dimension of $F_k(X)$. Our main results are the following: 

\begin{theorem}\label{genusdegree}
With the notations as above, if $\delta(n,\underline{d},k) = 1$, then the Fano scheme $F_k(X)$ is a connected smooth projective curve of degree $d$ and genus $g$ satisfying the following formula:
$$g = 1 + \frac{\displaystyle\sum_{i=1}^r\binom{d_i+k}{k+1} - n - 1}{2}d.$$
\end{theorem}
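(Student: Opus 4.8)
The plan is to realize $F_k(X)$ as the zero locus of a section of a natural vector bundle on the Grassmannian $G:=G(k+1,n+1)$ and then compute its canonical class by adjunction. Let $S\subset V\otimes\mathcal O_G$ be the tautological rank-$(k+1)$ subbundle (with $V=\mathbb C^{n+1}$) and $Q$ the universal quotient. Writing $X=V(f_1,\dots,f_r)$ with $f_i\in H^0(\mathbb P^n,\mathcal O(d_i))=\Sym^{d_i}V^*$, the restriction of a form of degree $d_i$ to a $k$-plane defines, as $[W]$ varies over $G$, a section $s_i$ of $\Sym^{d_i}S^*$ whose zero scheme is exactly $\{[W]:\mathbb P(W)\subset V(f_i)\}$. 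Hence $F_k(X)=Z(s)$ for $s=(s_1,\dots,s_r)\in H^0(G,\mathcal E)$, where
\[
\mathcal E=\bigoplus_{i=1}^r\Sym^{d_i}S^*,\qquad \rank\mathcal E=\sum_{i=1}^r\binom{d_i+k}{k}=\dim G-\delta(n,\underline d,k)=\dim G-1.
\]
Because $X$ is general, the results recalled above (\cite[Corollary 2.2]{B}, \cite[Theorem 2.1]{DM}) show that $F_k(X)$ is smooth of the expected dimension $1$; equivalently, $s$ vanishes transversally, so the normal bundle of $C:=F_k(X)$ in $G$ is $N_{C/G}=\mathcal E|_C$. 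For connectedness we invoke the fact that, for general $X$ and $\delta(n,\underline d,k)\ge 1$, the scheme $F_k(X)$ is irreducible (\cite{DM}, \cite{B}); being smooth, it is then a connected smooth projective curve. (Alternatively, $H^0(C,\mathcal O_C)=\mathbb C$ can be read off from the Koszul resolution of $\mathcal O_C$ on $G$ together with Bott vanishing for the exterior powers of $\mathcal E^*$, which involve only symmetric powers of $S$.)

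With these reductions, adjunction gives $K_C=(K_G\otimes\det\mathcal E)|_C$, hence $2g-2=\deg K_C=\int_C c_1(K_G)+\int_C c_1(\mathcal E)$, and it remains to evaluate both terms against $C$. Let $\sigma_1\in H^2(G)$ be the Pl\"ucker hyperplane class, so that $\sigma_1=c_1(S^*)=c_1(Q)$ and $d=\deg C=\int_C\sigma_1$. Since $T_G\cong\Hom(S,Q)=S^*\otimes Q$ we have
\[
c_1(T_G)=\rank(Q)\,c_1(S^*)+\rank(S^*)\,c_1(Q)=\bigl((n-k)+(k+1)\bigr)\sigma_1=(n+1)\sigma_1,
\]
so $c_1(K_G)=-(n+1)\sigma_1$ (the Grassmannian has index $n+1$). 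For the symmetric powers, if $a_1,\dots,a_{k+1}$ are the Chern roots of $S^*$, then $\Sym^{d}S^*$ has as Chern roots the $\binom{d+k}{k}$ sums $a_{j_1}+\cdots+a_{j_d}$ with $1\le j_1\le\cdots\le j_d\le k+1$; summing these and using the symmetry of the $a_j$ gives
\[
c_1(\Sym^{d}S^*)=\frac{d}{k+1}\binom{d+k}{k}\,\sigma_1=\binom{d+k}{k+1}\,\sigma_1,
\]
the last equality being the elementary identity $\tfrac{d}{k+1}\binom{d+k}{k}=\binom{d+k}{k+1}$. Summing over $i$ yields $c_1(\mathcal E)=\bigl(\sum_{i=1}^r\binom{d_i+k}{k+1}\bigr)\sigma_1$.

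Combining the two computations,
\[
2g-2=\int_C\Bigl(\sum_{i=1}^r\binom{d_i+k}{k+1}-n-1\Bigr)\sigma_1=\Bigl(\sum_{i=1}^r\binom{d_i+k}{k+1}-n-1\Bigr)d,
\]
and rearranging gives the asserted formula. Note that the explicit value of $d$ is not needed for this: it is simply the Pl\"ucker degree of $C$, for which one may quote $d=\int_G\sigma_1\cdot c_{\ctop}(\mathcal E)$ from \cite[Theorem 4.3]{DM} or \cite[Theorem 1.1]{H}. The intersection-theoretic part above is routine; the point that really must be secured is the content of the first paragraph, namely that for general $X$ the section $s$ is transversal with connected zero locus, so that both $N_{C/G}=\mathcal E|_C$ and the assertion ``$C$ is connected'' are legitimate. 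One should also check that the hypotheses of the quoted smoothness and irreducibility statements are compatible with $\delta(n,\underline d,k)=1$; in fact the quadric exception never arises here, since $\delta=1$ together with $n\ge 4$ forces $X$ to be a complete intersection other than a quadric.
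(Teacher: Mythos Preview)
Your proof is correct and follows essentially the same route as the paper: realize $F_k(X)$ as the zero locus of a section of $\bigoplus_i\Sym^{d_i}S^\vee$, invoke the cited results of Borcea and Debarre--Manivel for smoothness and connectedness, and then compute the canonical/tangent class via the normal bundle sequence together with $c_1(T_G)=(n+1)\sigma_1$ and $c_1(\Sym^{d_i}S^\vee)=\binom{d_i+k}{k+1}\sigma_1$. The only cosmetic differences are that the paper phrases the last step as $g=1-\tfrac12\int_F c_1(T_F)$ rather than $2g-2=\deg K_C$, and proves the symmetric-power first Chern class formula by induction via the filtration $0\to L\otimes\Sym^{m-1}E\to\Sym^mE\to\Sym^mE'\to 0$ instead of your Chern-root count; both arguments yield the same identity.
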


\begin{theorem}\label{degre}
With the notations as above, if $\delta(n,\underline{d},k) \geq 0$, then the degree $d$ of $F_k(X)$ under the Pl\"ucker embedding is given by 
$$d = \frac{c(n,\underline{d},k)}{(k+1)!},$$
where $c(n,\underline{d},k)$ is the coefficient of $x_0^n\cdots x_k^n$ in the polynomial
$$\prod_{i=1}^r\prod_{a_0+\cdots+a_k=d_i,a_i\in\mathbb N}(a_0x_0+\cdots+a_kx_k)(x_0+\cdots+x_k)^{\delta(n,\underline{d},k)}\prod_{i\neq j}(x_i-x_j).$$
\end{theorem}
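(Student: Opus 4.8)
The plan is to realise $F_k(X)$ as the zero scheme of a regular section of an explicit vector bundle on the Grassmannian and then to evaluate the resulting top intersection number by equivariant localisation. Put $G=G(k+1,n+1)$ and let $S\subset\mathcal{O}_G^{n+1}$ be the tautological rank-$(k+1)$ subbundle. Restricting a homogeneous form of degree $d_i$ on $\mathbb P^n$ to the $(k+1)$-dimensional subspaces gives a section of $\Sym^{d_i}S^*$ whose zero locus is the set of $\Lambda$ contained in the corresponding hypersurface; hence, writing $X=V(F_1)\cap\cdots\cap V(F_r)$, the Fano scheme $F_k(X)$ is the zero scheme of a section $s$ of
$$\mathcal E_{\underline d}=\bigoplus_{i=1}^r\Sym^{d_i}S^*,\qquad \rank\mathcal E_{\underline d}=\sum_{i=1}^r\binom{d_i+k}{k}=(k+1)(n-k)-\delta(n,\underline d,k).$$
Since $X$ is a general complete intersection, the smoothness statements recalled in the introduction (\cite[Corollary 2.2]{B}, \cite[Theorem 2.1]{DM}) show that $s$ is a regular section, so $F_k(X)$ is smooth of the expected dimension $\delta:=\delta(n,\underline d,k)$ and $[F_k(X)]=c_{\ctop}(\mathcal E_{\underline d})$ in codimension $\rank\mathcal E_{\underline d}$. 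As the Plücker line bundle is $\mathcal O_G(1)=\det S^*$, with hyperplane class $\sigma_1=c_1(\det S^*)$, we get
$$d=\int_{F_k(X)}\sigma_1^{\delta}=\int_G c_{\ctop}(\mathcal E_{\underline d})\cdot\sigma_1^{\delta}.$$

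Next I would rewrite the integrand by the splitting principle. If $x_0,\dots,x_k$ denote the Chern roots of $S^*$, then $\Sym^{d_i}S^*$ has Chern roots $\{a_0x_0+\cdots+a_kx_k:a_0+\cdots+a_k=d_i,\ a_j\in\mathbb N\}$, so that
$$c_{\ctop}(\Sym^{d_i}S^*)=\prod_{a_0+\cdots+a_k=d_i}(a_0x_0+\cdots+a_kx_k),\qquad \sigma_1=x_0+\cdots+x_k,$$
and hence $c_{\ctop}(\mathcal E_{\underline d})\cdot\sigma_1^{\delta}=P(x_0,\dots,x_k)$, where
$$P(x_0,\dots,x_k)=\prod_{i=1}^r\ \prod_{a_0+\cdots+a_k=d_i}(a_0x_0+\cdots+a_kx_k)\cdot(x_0+\cdots+x_k)^{\delta}$$
is symmetric and homogeneous of degree $\dim G=(k+1)(n-k)$. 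Thus $d=\int_G P(x_0,\dots,x_k)$.

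The last step, which is also where the real work lies, is the evaluation of $\int_G P$. I would use the Atiyah--Bott localisation (Bott residue) formula for the standard action of $T=(\mathbb C^*)^{n+1}$ on $\mathbb P^n$, hence on $G$, with weights $u_0,\dots,u_n$. All the bundles above are $T$-equivariant; the fixed points of $G$ are the coordinate subspaces $\Lambda_I$ for $I\subseteq\{0,\dots,n\}$ with $|I|=k+1$; at $\Lambda_I$ the equivariant Chern roots of $S^*$ are $\{-u_i:i\in I\}$, while the equivariant Euler class of $T_{\Lambda_I}G=\Hom(S,\mathcal O_G^{n+1}/S)|_{\Lambda_I}$ is $\prod_{i\in I,\,j\notin I}(u_j-u_i)$. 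Localisation then gives
$$d=\sum_{\substack{I\subseteq\{0,\dots,n\}\\ |I|=k+1}}\frac{P\bigl(-u_i:i\in I\bigr)}{\displaystyle\prod_{i\in I,\,j\notin I}(u_j-u_i)},$$
a sum of rational functions in the $u_j$ which is in fact equal to the integer $d$. It remains to match this sum with the claimed coefficient. Since the right-hand side is a constant, I would evaluate it by a standard residue/partial-fraction computation: for each variable $x_l$ sum the residues over the poles $x_l=u_0,\dots,u_n$ of a suitable rational function built from $P$ and the Vandermonde $\prod_{i<j}(x_i-x_j)$, then pass to the limit $u_j\to 0$ — legitimate because $P$ is homogeneous, so the $u_j$ cancel out of the constant. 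This converts the sum into the coefficient of $x_0^{n}\cdots x_k^{n}$ in $P(x)\prod_{i\neq j}(x_i-x_j)$; the factor $(k+1)!$ appears because every subset $I$ is counted in all $(k+1)!$ orderings of its elements (allowed since $P$ is symmetric), and the identity $\prod_{i\neq j}(x_i-x_j)=(-1)^{\binom{k+1}{2}}\prod_{i<j}(x_i-x_j)^2$ accounts for the sign, replacing the antisymmetric Vandermonde of the localisation formula by the symmetric expression that appears in the statement. This yields
$$d=\frac{1}{(k+1)!}\Bigl[x_0^{n}\cdots x_k^{n}\Bigr]\Bigl(P(x_0,\dots,x_k)\prod_{i\neq j}(x_i-x_j)\Bigr),$$
i.e. the assertion with $c(n,\underline d,k)$ equal to this coefficient. (Alternatively one can derive this coefficient formula by pushing $P$ forward through the complete flag bundle of $S$, viewed as a tower of projective bundles over $\mathbb P^n$, where the Vandermonde factor and the $(k+1)!$ emerge directly from the fibrewise integrations.)

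I expect the first two steps to be essentially routine, and the genuine obstacle to be the final bookkeeping: converting the localisation sum over the $\binom{n+1}{k+1}$ fixed points into a single coefficient with the correct sign and the normalisation $1/(k+1)!$. As sanity checks I would confirm that the formula returns $d=\prod_i d_i$ when $k=0$ (the degree of a complete intersection in $\mathbb P^n$), and that for lines on the quintic threefold ($n=4$, $\underline d=(5)$, $k=1$, $\delta=0$) it gives $c(4,(5),1)/2!=2875$.
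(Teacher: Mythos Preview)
Your setup coincides with the paper's: both realise $F_k(X)$ as the zero locus of a regular section of $\mathcal F=\bigoplus_i\Sym^{d_i}S^\vee$, identify $[F_k(X)]=c_{\ctop}(\mathcal F)$, and write the degree as $\int_G P(x_0,\dots,x_k)$ for the same symmetric polynomial $P$ (the paper takes the $x_l$ to be Chern roots of $S$ rather than $S^\vee$, whence the global sign $(-1)^{(k+1)(n-k)}$, which is then absorbed by the cited formula). At this point the paper simply invokes an external integral formula for Grassmannians, \cite[Theorem~3]{H1}, and stops.

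What you add beyond the paper is a derivation of that integral formula via Atiyah--Bott localisation. The outline is correct, but the phrase ``pass to the limit $u_j\to 0$'' is not quite the right justification, since each individual summand in the localisation sum is singular there. A clean way to make the step rigorous is to package both sides as the same iterated contour integral
\[
\frac{1}{(2\pi i)^{k+1}}\oint\!\cdots\!\oint \frac{P(x)\prod_{i\neq j}(x_i-x_j)}{\prod_{l=0}^{k}\prod_{j=0}^{n}(x_l-u_j)}\,dx_0\cdots dx_k
\]
over large circles enclosing all the $u_j$. Summing the residues at the finite poles $x_l=u_{j_l}$ gives $(k+1)!$ times your localisation expression (each subset $I$ appears in all $(k+1)!$ orderings, and the numerator factor $\prod_{i\neq j}(x_i-x_j)$ cancels the internal part of the denominator), hence $(k+1)!\,d$; on the other hand, expanding $\prod_j(x_l-u_j)^{-1}=x_l^{-(n+1)}(1+O(x_l^{-1}))$ and using that $P(x)\prod_{i\neq j}(x_i-x_j)$ is homogeneous of degree exactly $(k+1)n$ shows the same integral equals $[x_0^n\cdots x_k^n]\bigl(P(x)\prod_{i\neq j}(x_i-x_j)\bigr)$ for every choice of $u_j$. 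This is exactly the content of \cite[Theorem~3]{H1}, so your proposal is effectively the paper's proof together with an in-line argument for the step the paper black-boxes; your alternative via the flag bundle of $S$ is another standard route to the same identity.
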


The statement of Theorem \ref{genusdegree} can be viewed as a natural generalization of the work of Markusevich \cite{M} and Tennision \cite{T}. Meanwhile, the statement of Theorem \ref{degre} seems to be similar with that of \cite[Theorem 4.3]{DM}. However, here we consider the coefficient of the monomial $x_0^n\cdots x_k^n$ in the product of the polynomial 
$$\prod_{i=1}^r\prod_{a_0+\cdots+a_k=d_i,a_i\in\mathbb N}(a_0x_0+\cdots+a_kx_k)(x_0+\cdots+x_k)^{\delta(n,\underline{d},k)}$$
by the discriminant 
$$\Delta = \prod_{i\neq j}(x_i-x_j)$$
instead of that of the monomial $x_0^n x_1^{n-1}\cdots x_k^{n-k}$ in the product of the same polynomial by the Vandermonde determinant  
$$a_{\delta} = \prod_{i<j}(x_i-x_j).$$
In particular, the proof of Theorem \ref{degre} is completely different from that of \cite[Theorem 4.3]{DM}. We apply an integral formula for Grassmannians which has been recently explored in \cite[Theorem 3]{H1}. The rest of the paper is organized as follows: Section 2 present the proof of Theorem \ref{genusdegree}. Theorem \ref{degre} is proved in Section 3.

\section{Proof of Theorem \ref{genusdegree}}

For the proof of Theorem \ref{genusdegree}, we first prove some lemmas.

\begin{lemma}\label{lem1}
If $E$ is a vector bundle of rank $k+1$ and $\Sym^mE$ is the $m$-th symmetric power of $E$, then the rank of $\Sym^mE$ is $\binom{m+k}{k}$ and
$$c_1(\Sym^mE) = \binom{m+k}{k+1}c_1(E),$$
where $c_1(E)$ is the first Chern class of the vector bundle $E$.
\end{lemma}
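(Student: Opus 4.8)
The plan is to prove both assertions by the splitting principle. For the rank, I would note that $\Sym^m E$ restricts over each point to the $m$-th symmetric power of a $(k+1)$-dimensional vector space, whose dimension is the number of degree-$m$ monomials in $k+1$ variables, namely $\binom{m+k}{k}$; hence $\rank \Sym^m E = \binom{m+k}{k}$.

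For the Chern class, I would invoke the splitting principle to reduce to the case $E = L_0 \oplus \cdots \oplus L_k$ with $x_i = c_1(L_i)$, so $c_1(E) = x_0 + \cdots + x_k$. Then $\Sym^m E$ decomposes as the direct sum, over all tuples $(a_0,\ldots,a_k)\in\mathbb N^{k+1}$ with $a_0+\cdots+a_k = m$, of the line bundles $L_0^{\otimes a_0}\otimes\cdots\otimes L_k^{\otimes a_k}$, each of first Chern class $a_0x_0+\cdots+a_kx_k$. Summing over the summands gives
$$c_1(\Sym^m E) = \sum_{a_0+\cdots+a_k=m}(a_0x_0+\cdots+a_kx_k).$$
This expression is symmetric in $x_0,\ldots,x_k$, so it equals $C\cdot(x_0+\cdots+x_k) = C\cdot c_1(E)$, where $C = \sum_{a_0+\cdots+a_k=m} a_0$ is the common coefficient of each $x_j$. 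To evaluate $C$, I would sum this identity over $j = 0,\ldots,k$, obtaining $(k+1)C = \sum_{a_0+\cdots+a_k=m}(a_0+\cdots+a_k) = m\binom{m+k}{k}$, so $C = \frac{m}{k+1}\binom{m+k}{k} = \binom{m+k}{k+1}$, the last step being an elementary rearrangement of factorials.

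I do not expect a genuine obstacle; the only points that need care are the correct bookkeeping of the splitting-principle decomposition of $\Sym^m E$ and the closing binomial identity. One could instead compute $C = \sum_{t=0}^m t\binom{m-t+k-1}{k-1}$ directly via the hockey-stick identity, but the averaging argument above is shorter and less error-prone.
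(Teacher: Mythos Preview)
Your argument is correct. Both the rank count and the Chern-root computation via the splitting principle are standard and your symmetry/averaging trick to extract the coefficient $C=\binom{m+k}{k+1}$ is clean and accurate.

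The paper, however, takes a different route: it argues by induction on $m$ and $k$. Choosing a line subbundle $L\subset E$ with quotient $E'$ of rank $k$, one has the exact sequence
\[
0 \longrightarrow L\otimes \Sym^{m-1}E \longrightarrow \Sym^m E \longrightarrow \Sym^m E' \longrightarrow 0,
\]
and then both the rank and $c_1$ are obtained from additivity in exact sequences together with the inductive hypothesis and Pascal's identity. Your approach bypasses the induction entirely by writing down the Chern roots of $\Sym^m E$ and summing; this is shorter and, incidentally, dovetails with the Chern-root description the paper itself uses later when computing $c_{\ctop}(\Sym^{d_i}S^\vee)$ in Section~3. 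The paper's exact-sequence argument, on the other hand, avoids invoking the splitting principle and works directly at the level of filtrations, which some readers may find more self-contained. Either method is perfectly adequate here.
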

\begin{proof}
We prove by induction on $m$ and $k$. For $m=1$, we have $\Sym^1E = E$, so the rank of $\Sym^1E$ is $k+1$ and $c_1(\Sym^1E) = c_1(E)$ for all $k$, the conclusion is true. Assume that the conclusion holds for all $m'\leq m$ and $k'\leq k$. By arguments similar to those in \cite[Lemma 7.6]{EH}, we consider an exact sequence of vector bundles
$$0 \longrightarrow L \longrightarrow E \longrightarrow E' \longrightarrow 0,$$
where $L$ is a line bundle and $E'$ is a vector bundle of rank $k$. Thus we have
$$c_1(E) = c_1(L) + c_1(E').$$
The universal property of the symmetric powers (see, for example, \cite[Proposition A.2.2.d]{E}) shows that for each $m \geq 1$ there is an exact sequence
$$0 \longrightarrow L \otimes \Sym^{m-1}E \longrightarrow \Sym^mE \longrightarrow \Sym^mE' \longrightarrow 0.$$
By induction, the rank of $\Sym^mE$ is 
$$\binom{m+k-1}{k-1} + \binom{m-1+k}{k} = \binom{m+k}{k},$$
and
\begin{eqnarray*}
c_1(\Sym^mE)& = & c_1(L \otimes \Sym^{m-1}E) + c_1(\Sym^mE')\\
			& = & \binom{m+k-1}{k}c_1(L) + c_1(\Sym^{m-1}E) + \binom{m+k-1}{k}c_1(E')\\
			& = & \binom{m+k-1}{k}c_1(E) + \binom{m+k-1}{k+1}c_1(E)\\
			& = & \binom{m+k}{k+1}c_1(E).
\end{eqnarray*}
\end{proof}

\begin{lemma}(\cite[Proposition 5.25]{EH}) \label{lem2}
The first Chern class of the tangent bundle $T_G$ of the Grassmannian $G = G(k+1,n+1)$ is
$$c_1(T_G) = (n+1)\sigma_1,$$
where $\sigma_1 = c_1(S^\vee) = c_1(Q)$ is a hyperplane section of $G$ in the  Pl\"ucker embedding, $S$ and $Q$ are respectively universal sub and quotient bundles on $G$, and $S^\vee$ is the dual of $S$.
\end{lemma}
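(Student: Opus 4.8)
The plan is to deduce the formula from the standard description of the tangent bundle of a Grassmannian together with a formal manipulation of first Chern classes. First I would recall the tautological exact sequence on $G = G(k+1,n+1)$,
$$0 \longrightarrow S \longrightarrow \mathcal O_G^{\oplus(n+1)} \longrightarrow Q \longrightarrow 0,$$
in which $\rank S = k+1$ and $\rank Q = n-k$, and the canonical isomorphism $T_G \cong \Hom(S,Q) = S^\vee \otimes Q$. Taking first Chern classes of the tautological sequence gives $c_1(S) + c_1(Q) = 0$, hence $c_1(S^\vee) = -c_1(S) = c_1(Q)$, and this common class is by definition the Plücker hyperplane class $\sigma_1$.

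Next I would invoke the first Chern class of a tensor product: for vector bundles $A$ and $B$ of ranks $a$ and $b$,
$$c_1(A \otimes B) = b\,c_1(A) + a\,c_1(B),$$
which follows at once from $\det(A\otimes B) \cong (\det A)^{\otimes b}\otimes(\det B)^{\otimes a}$ via the splitting principle. Applying this with $A = S^\vee$ (so $a = k+1$) and $B = Q$ (so $b = n-k$) yields
$$c_1(T_G) = c_1(S^\vee\otimes Q) = (n-k)\,c_1(S^\vee) + (k+1)\,c_1(Q) = \big((n-k)+(k+1)\big)\sigma_1 = (n+1)\sigma_1,$$
which is the claimed identity.

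The only genuinely nontrivial ingredient is the isomorphism $T_G \cong S^\vee\otimes Q$; the remaining steps are purely formal. This isomorphism can be obtained from the functor-of-points description of the Grassmannian, since first-order deformations of a subspace $V\subset\mathbb C^{n+1}$ are classified by $\Hom(V,\mathbb C^{n+1}/V)$, and these glue to the global statement; alternatively one checks it directly in the standard affine charts of $G$. As this is exactly \cite[Proposition 5.25]{EH}, in the write-up I would cite that reference for the identification of $T_G$ and include the short tensor-product computation above so that the numerical conclusion is self-contained.
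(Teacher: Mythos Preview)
Your proposal is correct and follows essentially the same approach as the paper: both use the identification $T_G \cong S^\vee \otimes Q$ and the tensor-product formula $c_1(A\otimes B) = (\rank B)\,c_1(A) + (\rank A)\,c_1(B)$, together with $c_1(S^\vee)=c_1(Q)=\sigma_1$. Your version simply supplies a bit more justification (deriving $c_1(S^\vee)=c_1(Q)$ from the tautological sequence and the tensor formula from the determinant identity), but the argument is the same.
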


\begin{proof}
The tangent bundle $T_G$ is expressed as the tensor product $S^\vee \otimes Q$. Thus we have
$$c_1(T_G) = c_1(S^\vee \otimes Q) = \rank(Q)c_1(S^\vee) + \rank(S^\vee)c_1(Q).$$ 
Since $\rank(Q) = n-k, \rank(S^\vee) = k+1$, and $c_1(S^\vee) = c_1(Q) = \sigma_1$, hence $c_1(T_G) = (n+1)\sigma_1$ as desired.
\end{proof}

\begin{lemma}(\cite[Proposition 6.4]{EH})\label{lem3}
Let $X\subset\mathbb P^n$ be a general complete intersection of type $(d_1,\ldots,d_r)$. The scheme $F = F_k(X)$ is the zero locus of a global section of the vector bundle
$$\mathcal F = \bigoplus_{i=1}^r \Sym^{d_i}S^\vee.$$
\end{lemma}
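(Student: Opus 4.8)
The plan is to write the section down explicitly and then check that its zero scheme is exactly $F = F_k(X)$, first as sets and then as schemes. Put $V = \mathbb C^{n+1}$, so $G = G(k+1,n+1)$ parametrizes $(k+1)$-dimensional subspaces $W\subseteq V$ and the universal subbundle $S\subseteq V\otimes\mathcal O_G$ has fibre $S_{[W]} = W$. Dualizing the inclusion $S\hookrightarrow V\otimes\mathcal O_G$ gives a surjection $V^\vee\otimes\mathcal O_G\twoheadrightarrow S^\vee$, and applying $\Sym^{d_i}$ yields a surjection $\Sym^{d_i}V^\vee\otimes\mathcal O_G\twoheadrightarrow\Sym^{d_i}S^\vee$. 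Since $\Sym^{d_i}V^\vee = H^0(\mathbb P^n,\mathcal O(d_i))$, a defining form $f_i$ of $X$, regarded as a constant section of the trivial bundle, maps to a global section $s_i\in H^0(G,\Sym^{d_i}S^\vee)$ whose value at $[W]$ is the restriction $f_i|_W\in\Sym^{d_i}W^\vee$. I would then take $s = (s_1,\dots,s_r)\in H^0(G,\mathcal F)$ with $\mathcal F = \bigoplus_{i=1}^r\Sym^{d_i}S^\vee$.

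Set-theoretically the zero locus of $s$ is immediate: $s([W]) = 0$ if and only if $f_i|_W = 0$ for all $i$, i.e.\ if and only if the $k$-plane $\mathbb P(W)$ is contained in each hypersurface $V(f_i)$, i.e.\ if and only if $\mathbb P(W)\subseteq X$; hence the zero scheme $Z(s)$ and $F_k(X)$ have the same closed points.

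The substantive point is to promote this to an equality of schemes. I would do this locally on a Pl\"ucker affine chart of $G$: on the open set where $W$ maps isomorphically onto a fixed coordinate $(k+1)$-plane, $W$ is the graph of a matrix of coordinate functions, $S^\vee$ trivializes via the dual coordinates, and the components of $s_i$ in this trivialization are precisely the coefficients that appear when one substitutes the graph parametrization of $W$ into $f_i$ and expands. Those coefficients are exactly the equations cutting out the incidence condition ``$\mathbb P(W)\subseteq V(f_i)$'' on the Grassmannian, so the scheme structure of $Z(s)$ coincides with the natural scheme structure on $F_k(X)$ coming from the universal family over $G$ (equivalently, the one inherited from the Hilbert scheme). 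I expect this scheme-theoretic comparison to be the only place needing genuine care, the set-theoretic statement being trivial.

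Finally I would remark that generality of $X$ plays no role in the identification $F_k(X) = Z(s)$ itself; it is invoked only afterwards, to guarantee — via a Bertini-type argument, using that $\Sym^{d_i}S^\vee$ is a quotient of a trivial bundle and hence globally generated — that $s$ is a regular section, so that $F_k(X) = Z(s)$ is smooth of the expected codimension $\rank\mathcal F = \sum_{i=1}^r\binom{d_i+k}{k}$ in $G$, which is what is needed for Theorems \ref{genusdegree} and \ref{degre}.
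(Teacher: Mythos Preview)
Your proposal is correct and follows essentially the same approach as the paper: build a section $s_i\in H^0(G,\Sym^{d_i}S^\vee)$ for each defining hypersurface and assemble them into $s=(s_1,\dots,s_r)$. The paper's proof simply invokes the hypersurface case from \cite[Proposition~6.4]{EH} as a black box and then intersects, whereas you unpack that citation by constructing $s_i$ explicitly via the surjection $\Sym^{d_i}V^\vee\otimes\mathcal O_G\twoheadrightarrow\Sym^{d_i}S^\vee$ and verifying the scheme-theoretic identification in local charts; this is more self-contained but not a different route.
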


\begin{proof}
Assume that $X$ is the intersection of $r$ hypersurfaces $X_1, \ldots, X_r$ with $\deg(X_i) = d_i$ for all $i$. Each $F_k(X_i)$ is the zero locus of a global section $s_i$ of $\Sym^{d_i}S^\vee$. Thus the scheme $F$, which is the intersection of the $F_k(X_i)$, is the zero locus of a global section $s = (s_1,\ldots,s_r)$ of the vector bundle $\mathcal F$. 
\end{proof}

\begin{proof}[Proof of Theorem \ref{genusdegree}]
The dimension and smoothness of the Fano scheme $F = F_k(X)$ are proved in \cite[Corollary 2.2]{B} and \cite[Th\'eor\`eme 2.1]{DM}, also see further discussion in \cite[Chapter 6]{EH} when $X$ is a hypersurface. The connectedness of $F$ is proved in \cite[Theorem 4.1]{B}, also see in \cite[Theorem 0.1]{L} for the hypersurface case. 

Recall that the degree of a subvariety of a projective space is defined to be the number of its intersection points with a generic linear subspace of complementary dimension. The degree $d$ of $F$, considered as a subvariety of a projective space thanks to Pl\"ucker embedding, is computed by the following formulas:
\begin{equation}
d = \int_G[F]\cdot \sigma_1^{\delta(n,\underline{d},k)},
\end{equation}
where $\int_G\alpha$ denotes the degree of the $0$-dimensional cycle class $\alpha$ on $G$ defined in \cite[Definition 1.4]{F}. If the expected dimension $\delta(n,\underline{d},k) = 1$, then the degree $d$ and genus $g$ of $F$ is computed as follows:
$$d = \int_G[F]\cdot \sigma_1$$
and
$$g  =  1 - \chi(\mathcal O_F) =  1 - \frac{1}{2}\int_Fc_1(T_F),$$
where $T_F$ is the tangent bundle of $F$. In order to determine $c_1(T_F)$, we consider the normal bundle sequence
$$0 \longrightarrow T_F \longrightarrow T_G|_F \longrightarrow N_{F/G} \longrightarrow 0,$$
where $N_{F/G}$ is the normal bundle of $F$ in $G$. By Lemma \ref{lem3}, $F$ is the zero locus of a section of $\mathcal F$, so $N_{F/G}$ is isomorphic to $\mathcal F|_F$. By Lemma \ref{lem1} and Lemma \ref{lem2}, we have
\begin{eqnarray*}
\int_F c_1(T_F) & = & \int_F (c_1(T_G|_F) - c_1(\mathcal F|_F)) \\
		        & = & \int_G (c_1(T_G) - c_1(\mathcal F))\cdot [F]\\ 
		        & = & \int_G\left((n+1)\sigma_1 - \sum_{i=1}^rc_1(\Sym^{d_i}S^\vee)\right)\cdot[F]\\
		 & = & \int_G \left((n+1) - \sum_{i=1}^r \binom{d_i+k}{k+1}\right)\sigma_1\cdot[F]\\
		 & = & \left(n+1 - \sum_{i=1}^r \binom{d_i+k}{k+1}\right)\int_G\sigma_1\cdot[F]\\
		 & = & \left(n+1 - \sum_{i=1}^r \binom{d_i+k}{k+1}\right)d.
\end{eqnarray*}
In summary, the genus-degree formula is obtained as desired.
\end{proof}

\begin{example}\cite[Section 2]{T}\label{exam1}
Let $X\subset\mathbb P^4$ be a general quartic threefold. In this case, $\delta(4,(4),1) = 1$ and $F_1(X)$ is a smooth projective curve of degree $d = 320$ and genus $g = 801$ satisfying
$$g = 1 + \frac{\displaystyle\binom{5}{2}-5}{2}d = 1 + \frac{5}{2}d.$$
More generally, if $X \subset \mathbb P^n \ (n \geq 4)$ be a general hypersurface of degree $2n-4$, then $F_1(X) \subset \mathbb P^{\binom{n+1}{2}-1}$ is a smooth projective curve of degree $d$ and genus $g$ satisfying the following formula:
$$g = 1 + \frac{\displaystyle\binom{2n-3}{2}-n-1}{2}d.$$
\end{example} 

\begin{example}\cite[Theorem 2.2 (i)]{M}\label{exam2}
Let $X \subset \mathbb P^5$ be a general complete intersection of type $(2,3)$. In this case, $\delta(5,(2,3),1) = 1$ and $F_1(X)$ is a smooth projective curve of degree $d = 180$ and genus $g=271$ satisfying
$$g = 1 + \frac{\displaystyle\binom{3}{2}+\binom{4}{2}-6}{2}d = 1 + \frac{3}{2}d.$$
More generally, if $X \subset \mathbb P^n \ (n \geq 5)$ be a general complete intersection of type $(n-3,n-2)$, then $F_1(X)$ is a smooth projective curve of degree $d$ and genus $g$ satisfying the following formula:
$$g = 1 + \frac{\displaystyle\binom{n-2}{2}+\binom{n-1}{2}-n-1}{2}d.$$
\end{example}

\begin{example}\cite[Theorem 2.2 (ii)]{M}\label{exam3}
Let $X \subset \mathbb P^6$ be a general complete intersection of type $(2,2,2)$. Then $\delta(6,(2,2,2),1) = 1$ and $F_1(X)$ is a smooth projective curve of degree $d = 128$ and genus $g=129$ satisfying
$$g = 1 + \frac{\displaystyle\binom{3}{2}+\binom{3}{2} + \binom{3}{2} - 7}{2}d = 1 + d.$$
More generally, if $X \subset \mathbb P^n \ (n \geq 6)$ be a general complete intersection of type $(2,n-4,n-4)$, then $F_1(X)$ is a smooth projective curve of degree $d$ and genus $g$ satisfying the following formula:
$$g = 1 + \frac{\displaystyle2\binom{n-3}{2}-n+2}{2}d.$$
\end{example}

\section{Proof of Theorem \ref{degre}} 

By the Gauss-Bonnet formula (see, for example, \cite[Subsection 3.5.3]{Man}), the class of $F_k(X)$ is the top Chern class of the vector bundle $\mathcal F$. If $\delta(n,\underline{d},k) \geq 0$, then the degree $d$ of $F_k(X)$ can be expressed as follows:
\begin{equation}\label{deg}
d 	= \int_{G(k+1,n+1)} \prod_{i=1}^rc_{\ctop}(\Sym^{d_i}S^\vee)\cdot c_1(S^\vee)^{\delta(n,\underline{d},k)},
\end{equation}
where $c_{\ctop}(E)$ is the top Chern class of the vector bundle $E$. By the splitting principle (\cite[Remark 3.2.3 and Example 3.2.6]{F}), the characteristic class 
$$\prod_{i=1}^rc_{\ctop}(\Sym^{d_i}S^\vee)\cdot c_1(S^\vee)^{\delta(n,\underline{d},k)}$$ 
is represented by the symmetric polynomial
$$(-1)^{(k+1)(n-k)}\prod_{i=1}^r\prod_{a_0+\cdots+a_k=d_i,a_i\in\mathbb N}(a_0x_0+\cdots+a_kx_k)(x_0+\cdots+x_k)^{\delta(n,\underline{d},k)}.$$
Note that $x_0,\ldots,x_n$ are the Chern roots of the tautological subnumdle $S$ on the Grassmannian $G(k+1,n+1)$.
By \cite[Theorem 3]{H1}, Theorem \ref{degre} follows.

As a corollary, we have the following result.

\begin{corollary}
Suppose that $\delta(n,\underline{d},k) = 0$. Then the number of linear subspaces of dimension $k$ contained in a generic complete intersection of type $\underline{d} = (d_1,\ldots,d_r)$ in $\mathbb P^n$ is equal to $\frac{c(n,\underline{d},k)}{(k+1)!}$, where $c(n,\underline{d},k)$ is the coefficient of the monomial $x_0^n \cdots x_k^n$ in the polynomial
$$\prod_{i=1}^r\prod_{a_0+\cdots+a_k=d_i,a_i\in\mathbb N}(a_0x_0+\cdots+a_kx_k)\prod_{i\neq j}(x_i-x_j).$$
\end{corollary}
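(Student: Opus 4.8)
The plan is to derive the corollary as the special case $\delta(n,\underline{d},k) = 0$ of Theorem \ref{degre}, observing only that in this situation the Pl\"ucker degree coincides with an enumerative count. First I would note that when $\delta(n,\underline{d},k) = 0$, the Fano scheme $F_k(X)$ of a generic complete intersection is, by \cite[Corollary 2.2]{B} and \cite[Th\'eor\`eme 2.1]{DM}, a smooth scheme of dimension zero, hence a finite reduced set of points; its cardinality is precisely the number of linear $k$-planes contained in $X$. On the other hand, the degree of a zero-dimensional projective subscheme under any embedding is just its length, so the degree $d$ of $F_k(X)$ under the Pl\"ucker embedding equals this same cardinality. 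Thus the enumerative number in question is exactly the $d$ computed by Theorem \ref{degre}.

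Next I would specialize the formula of Theorem \ref{degre} to $\delta(n,\underline{d},k) = 0$. In that case the factor $(x_0 + \cdots + x_k)^{\delta(n,\underline{d},k)}$ is identically $1$ and disappears from the product, so $c(n,\underline{d},k)$ becomes the coefficient of $x_0^n \cdots x_k^n$ in
$$\prod_{i=1}^r\prod_{a_0+\cdots+a_k=d_i,a_i\in\mathbb N}(a_0x_0+\cdots+a_kx_k)\prod_{i\neq j}(x_i-x_j),$$
which is exactly the polynomial appearing in the statement of the corollary. Combining this with the previous paragraph gives that the number of $k$-planes on $X$ equals $\frac{c(n,\underline{d},k)}{(k+1)!}$, as claimed.

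Since essentially everything is a direct substitution into an already-proved theorem, there is no real obstacle; the only point requiring a word of justification is the identification of the Pl\"ucker degree of a zero-dimensional scheme with its point count, which is immediate from the definition of degree recalled in the proof of Theorem \ref{genusdegree} (the number of intersection points with a generic linear subspace of complementary dimension $0$ is the scheme itself, counted with multiplicity, and smoothness forces all multiplicities to be $1$). One could alternatively phrase the whole argument purely at the level of intersection theory on $G(k+1,n+1)$, reading off the count as $\int_G c_{\ctop}(\mathcal F)$ via the Gauss–Bonnet formula already invoked, which for a generic section is a transverse count of zeros and hence the desired cardinality.
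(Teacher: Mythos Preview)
Your proposal is correct and follows essentially the same approach as the paper, which simply presents the statement as an immediate corollary of Theorem~\ref{degre} with no further argument. You have merely spelled out the obvious specialization $\delta(n,\underline{d},k)=0$ and the identification of Pl\"ucker degree with point count in more detail than the paper does.
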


\subsection*{Acknowledgements}
This work is finished during the author's postdoctoral fellowship at the National Center for Theoretical Sciences (NCTS), Taipei, Taiwan. He would like to thank the NCTS, especially Jungkai Chen, for the financial support and hospitality.

\end{document}